\documentclass[12pt, reqno, twoside, a4paper]{article}

\usepackage{amsmath,amssymb,amsthm,amsfonts,mathtools}
\usepackage{mathrsfs}
\usepackage{enumerate}
\usepackage{natbib}
\usepackage{multirow}
\usepackage[colorlinks=true,linkcolor=blue,citecolor=blue,urlcolor=blue]{hyperref}
\usepackage{authblk}
\usepackage{orcidlink}
\usepackage{todonotes}
\usepackage[margin=1in]{geometry}
\usepackage{fancyhdr}
\newcommand\blfootnote[1]{%
  \begingroup
  \renewcommand\thefootnote{}\footnote{\hspace{-1.8em}#1}%
  \addtocounter{footnote}{-1}%
  \endgroup
}

\newtheorem{theorem}{Theorem}[section]
\newtheorem{lemma}[theorem]{Lemma}
\newtheorem{proposition}[theorem]{Proposition}
\newtheorem{corollary}[theorem]{Corollary}

\theoremstyle{definition}
\newtheorem{definition}[theorem]{Definition}

\newtheorem*{acknow}{\textup{Acknowledgement}}

\newcommand{\Cp}{C_p(X, G)}
\newcommand{\N}{\mathbb{N}}
\newcommand{\R}{\mathbb{R}}
\newcommand{\U}{\mathcal{U}}
\newcommand{\V}{\mathcal{V}}
\newcommand{\A}{\mathcal{A}}
\newcommand{\B}{\mathcal{B}}
\newcommand{\sfin}{S_{fin}(\Omega(X), \Omega(X))}
\newcommand{\sone}{S_1(\Omega(X), \Omega(X))}
\newcommand{\gfin}{G_{fin}(\Omega(X), \Omega(X))}
\newcommand{\gone}{G_1(\Omega(X), \Omega(X))}

\newcommand{\gfinY}{G_{fin}(\Omega(Y), \Omega(Y))}
\newcommand{\goneY}{G_1(\Omega(Y), \Omega(Y))}

\title{Countable Fan Tightness and Selection Games in 
Group-Valued Function Spaces}
\author{Souvik Mandal$^{\,a, *}$ \orcidlink{0009-0004-0429-3063}}
\author{Ankur Sarkar$^{\,b}$ \orcidlink{0009-0002-3320-2568}}

\affil{\footnotesize\itshape $^{a}$Department of Mathematics, Indian Institute of Technology Madras, \\ 
\footnotesize\itshape Chennai 600036, Tamil Nadu, India.}
\affil{\footnotesize\itshape $^{b}$The Institute of Mathematical Sciences, A CI of Homi Bhabha National Institute, \\ 
\footnotesize\itshape Chennai 600113, Tamil Nadu, India.}
\date{}
\begin{document}
\maketitle
\blfootnote{$^{*}$ \textit{Corresponding author.}}
\blfootnote{\raggedright\hspace{0.50em}
  \makebox[8.5em][l]{\textit{E-mail addresses:}}%
  \begin{minipage}[t]{0.75\textwidth}
    \texttt{ma22d014@smail.iitm.ac.in}, \texttt{ssouvik.xyz@gmail.com} (S. Mandal); \\
    \texttt{ankurimsc@gmail.com} (A. Sarkar).
  \end{minipage}}
  \vspace{-3.5em}

\author{}


\begin{abstract}
\noindent Game-theoretic characterizations of selection principles 
provide a powerful framework for analyzing covering 
properties through strategic interactions. 
For a Tychonoff space $X$ and a non-trivial metrizable 
arc-connected topological group $G$, we prove that 
Player~II has a winning strategy in the $\Omega$-Menger 
game on $X$ if and only if Player~II has a winning 
strategy in the countable fan tightness game on 
$C_p(X, G)$ at the identity function. 
The analogous equivalence is established between the 
$\Omega$-Rothberger game on $X$ and the countable 
strong fan tightness game on $C_p(X, G)$ at the 
identity function. 
These results extend the game-theoretic 
characterizations of Clontz from $G = \mathbb{R}$ to 
arbitrary metrizable arc-connected groups, and lift the 
selection-principle equivalences of Ko\v{c}inac to the 
game-theoretic setting. As consequences, we establish 
that the game-theoretic tightness properties of 
$C_p(X,G)$ are independent of $G$, preserved under 
$G$-equivalence, and remain valid for Markov 
strategies.
\end{abstract}
\vspace{-0.5em}
\begin{center}
\begin{minipage}{0.845\textwidth}
    \footnotesize
    \begin{list}{}{%
        \leftmargin=4.3em 
        \labelwidth=5em
        \labelsep=0pt \parsep=0pt \topsep=0pt \itemsep=0pt
    }
        \item[\textit{Keywords:}\hfill] Selection principles, 
Topological games, Fan tightness, $\omega$-cover, 
Topological groups, $C_p$-theory.
    \end{list}

    \vspace{2pt} 

    \begin{list}{}{%
        \leftmargin=17.8em 
        \labelwidth=18.5em
        \labelsep=0pt \parsep=0pt \topsep=0pt \itemsep=0pt
    }
        \item[2020\hspace{1mm}\textit{Mathematics Subject 
Classification:}\hfill] Primary 54C35, 54D20, 91A44;\\ 
Secondary 54H11, 22A05.
    \end{list}
\end{minipage}
\end{center}

\section{Introduction}
The theory of selection principles bridges general topology, infinite combinatorics, and topological game theory.
Originally formalized to characterize special sets of real numbers in measure theory, selection principles classify topological spaces based on their interaction with sequences of open covers; we refer the reader to \cite{Scheepers3, Scheepers, jmss, Arh} for a detailed treatment.

Concurrently, in the theory of $C_p$-spaces, following work of Arhangel'skii \cite{arhq,Arh}, covering properties of a Tychonoff space $X$ are characterized by convergence properties of $C_p(X,\mathbb{R})$, the space of continuous real-valued functions endowed with the topology of pointwise convergence. 
A classical theorem, established by Arhangel'skii~\cite[Theorem~4]{arhq} and reformulated in terms of $\omega$-covers by Just, Miller, Scheepers, and Szeptycki~\cite[Theorem~3.9]{jmss}, states that $\Omega$-Menger property holds if and only if $C_p(X, \R)$ has countable fan tightness; an alternative proof can be found in~\cite[Theorem~22]{Clontz}.

In the game-theoretic context, Scheepers~\cite{Scheepers3} proved that Player~II has a winning strategy in the $\Omega$-Menger game on $X$ if and only if Player~II has a winning strategy in the fan tightness game on $C_p(X,\mathbb{R})$.
Clontz~\cite{Clontz} subsequently established these 
equivalences at the game-theoretic level, but remains open for arbitrary metrizable groups. 

More recently, \( C_p \)-theory has been extended by replacing \( \mathbb{R} \) with an arbitrary topological group \( G \). 
Shakhmatov and Sp\v{e}v\'ak~\cite{SS} gave a systematic study of \( C_p(X,G) \) and established its basic topological properties. Ko\v{c}inac~\cite{Koc2} established a selection-principle equivalence staing that $\Omega$-Menger property on $X$ is 
equivalent to the countable fan tightness on $C_p(X,G)$, provided that $G$ is a metrizable topological group and $X$ is $G^*$-regular.
Mishra, Pandey, Ravindran, and Yadav~\cite{Mispro} further developed these selection-principle equivalences. 
However, these works do not address topological games and remain entirely within the framework of selection principles.

Theorems~\ref{thm:main_game} and~\ref{cor:rothberger} 
establish the equivalence between the $\Omega$-Menger 
game on $X$ and the countable fan tightness game on 
$C_p(X,G)$, and between the $\Omega$-Rothberger game 
on $X$ and the countable strong fan tightness game on 
$C_p(X,G)$, respectively, extending the results of 
Scheepers~\cite{Scheepers3} and 
Clontz~\cite{Clontz} from $G = \R$ to arbitrary 
metrizable arc-connected groups, and lifting the 
selection-principle equivalences of 
Ko\v{c}inac~\cite{Koc2} to the game-theoretic setting.
The known results and the position of the present 
contribution are summarized in the following table.
\begin{center}
\renewcommand{\arraystretch}{1.3}
\begin{tabular}{|c|c|p{5.4cm}|p{5cm}|}
\hline
& & \textbf{Selection principle} & \textbf{Game-theoretic}\\
\hline
\multirow{2}{*}{$G = \R$} 
& $\Omega$-Menger 
& Arhangel'skii~\cite[Theorem~4]{arhq}+ 
  Just et al.~\cite[Theorem~3.9]{jmss} 
& Scheepers~\cite{Scheepers3}, Clontz~\cite[Lemma~20]{Clontz}\\
\cline{2-4}
& $\Omega$-Rothberger 
& Sakai~\cite[Theorem~1]{propc}+ 
  Just et al.~\cite[Theorem~3.8]{jmss} 
& Scheepers~\cite{Scheepers3}, Clontz~\cite[Theorem~43]{Clontz}\\
\hline
\multirow{2}{*}{Metrizable $G$} 
& $\Omega$-Menger 
& Ko\v{c}inac~\cite[Corollary~2.4]{Koc2} 
& \textbf{Theorem~\ref{thm:main_game}}\\
\cline{2-4}
& $\Omega$-Rothberger 
& Ko\v{c}inac~\cite[Theorem~2.5]{Koc2} 
& \textbf{Theorem~\ref{cor:rothberger}}\\
\hline
\end{tabular}
\end{center}
The extension from $\mathbb{R}$ to a general metrizable group $G$ requires arc-connectedness of $G$, guaranteeing continuous paths from the identity to arbitrary elements, while for the space $X$, the Tychonoff property suffices and no normality assumption is needed.

\section{Preliminaries and Definitions}

Throughout this paper, $X$ denotes an infinite Tychonoff 
(completely regular Hausdorff) space, $\mathcal{O}(X)$ denotes the collection of open covers of $X$ and $G$ a non-trivial 
metrizable arc-connected topological group with identity 
element $e$. 
By the Birkhoff--Kakutani theorem 
\cite[Theorem~3.3.12]{birkaku}, a (Hausdorff) topological group is first-countable if and only if it is 
metrizable.

Since \(G\) is arc-connected, for any \(g \in G \setminus \{e\}\) there exists an embedding \(\gamma\colon [0,1] \hookrightarrow G\) with \(\gamma(0)=e\) and \(\gamma(1)=g\), and we refer to such a map \(\gamma\) as an arc in \(G\). 
We note from \cite[Corollary~31.6]{Willard} that, in our setting, arc-connectedness is equivalent to path-connectedness.

The space of all continuous functions from $X$ to $G$, equipped with the topology of pointwise convergence, is denoted by $\Cp$. 
The constant function $\mathbf{e} : X \to G$, defined by $\mathbf{e}(x) = e$ for all $x \in X$, will be used throughout. 
The family 
\[
\{W(x,U)\colon x\in X, \hspace{1mm}U \text{ is an open subset of } G \},
\] where 
\[
W(x,U)=\{f\in C_{p}(X,G)\,\colon\,f(x)\in U\,\}
\] forms a subbasis for the topology of $C_{p}(X,G)$. 
For any $f\in \Cp$, finite subset $F$ of $X$ and $\epsilon>0$, the set 
\[W_f(F, \varepsilon) := \{ h \in \Cp : d(h(x), f(x)) < 
\varepsilon \text{ for all } x \in F \}\] is a basic open neighbourhood of $f$, where $d$ is a metric on $G$. 
Note that 
\[W_f(F,\epsilon)=\bigcap_{x\in F} W\bigl(x,\, B_d(f(x).
\varepsilon)\bigr).\] 

We recall the general framework of selection principles and their associated infinite games, following Scheepers~\cite{Scheepers3,chp8}.
Let $\A$ and $\B$ be collections of subsets of an infinite set $S$.
\begin{definition}{\cite{Scheepers3}}
The \textit{selection principle} $S_{fin}(\A, \B)$ states: for every sequence $(A_n)_{n \in \N}$ of elements of $\A$, there exist finite subsets $B_n \subset A_n$ such that $\bigcup_{n \in \N} B_n \in \B$.
\end{definition}

\begin{definition}{\cite{Scheepers3}}
The \textit{selection principle} $S_1(\A, \B)$ states: for every sequence $(A_n)_{n \in \N}$ of elements of $\A$, there exist elements $b_n \in A_n$ such that $\{b_n : n \in \N\} \in \B$.
\end{definition}

\begin{definition}{\cite{Scheepers3}}\label{def:gfin_general}
The \textit{selection game} $G_{fin}(\A, \B)$ is an infinite game played over $\omega$ innings between two players, I and II.
In inning $n$, Player~I selects $A_n \in \A$; Player~II responds with a finite subset $B_n \subset A_n$.
Player~II wins if $\bigcup_{n \in \N} B_n \in \B$; otherwise Player~I wins.
\end{definition}

\begin{definition}{\cite{Scheepers3}}
The \textit{selection game} $G_1(\A, \B)$ is defined analogously to $G_{fin}(\A, \B)$, except that in each inning $n$, Player~II selects a \emph{single} element $b_n \in A_n$.
Player~II wins if $\{b_n : n \in \N\} \in \B$; otherwise Player~I wins.
\end{definition}

\begin{definition}{\cite{Scheepers3}}
A \textit{strategy} for Player~II in $G_{fin}(\A, \B)$ is 
a function
\[
\sigma \colon \bigcup_{n=1}^{\infty} \A^n \to 
\bigcup_{A \in \A} [A]^{<\omega}
\]
satisfying $\sigma(A_1, \ldots, A_n) \subset A_n$ for each 
$(A_1, \ldots, A_n) \in \A^n$; here $[A]^{<\omega}$ denotes 
the collection of all finite subsets of $A$.
That is, in inning $n$, having seen Player~I's moves 
$A_1, \ldots, A_n$, Player~II responds with the finite 
subset $B_n = \sigma(A_1, \ldots, A_n) \subset A_n$.
A strategy $\sigma$ is \textit{winning} if whenever 
Player~I plays $A_n \in \A$ in inning $n$, Player~II 
wins the game by responding with 
$\sigma(A_1, \ldots, A_n)$ in each inning $n$.
We write $\textup{II} \uparrow G_{fin}(\A, \B)$ to indicate 
that Player~II possesses a winning strategy in 
$G_{fin}(\A, \B)$.

A strategy for Player~II in $G_1(\A, \B)$ is defined analogously, with $\sigma(A_1, \ldots, A_n) \in A_n$. 
\end{definition}
\begin{definition}\cite{chp8}
A strategy $\sigma$ is called \textit{Markov} if there 
exists a function 
\[
\tau \colon \A \times \N \to 
\bigcup_{A \in \A} [A]^{<\omega}
\]with 
$\tau(A, n) \subset A$ for all $A \in \A$ and 
$n \in \N$, such that $\sigma(A_1, \ldots, A_n) = \tau(A_n, n)$. That is, Player~II's response depends only on 
Player~I's current move and the round number.
\end{definition}
We may observe that if Player~II has a winning strategy in 
$G_{fin}(\A, \B)$, then $S_{fin}(\A, \B)$ holds, but the 
converse need not follow.
\begin{definition}{\cite{Scheepers3}}\label{def:omega_cover}
An open cover $\U$ of $X$ is an \textit{$\omega$-cover} if $X \notin \U$ and every finite subset of $X$ is contained in some member of $\U$. 
We denote by $\Omega(X)$ the collection of all $\omega$-covers of $X$.
\end{definition}
In this paper, we focus on instances of this framework involving $\omega$-covers of $X$. 
Choosing $\A = \B = \mathcal{O}(X)$ recovers the classical Menger and Rothberger properties and their associated games, while $\A = \B = \Omega(X)$ yields the $\Omega$-Menger property $\sfin$, the $\Omega$-Rothberger property $\sone$, the $\Omega$-Menger game $\gfin$, and the $\Omega$-Rothberger game $\gone$, which are the focus of this paper.

The tightness properties of function spaces arise as instances of the same general framework. 
For a topological space $Y$ and $y \in Y$, let $\Omega_y = \{A \subset Y : y \in \overline{A} \setminus A\}$, the family of sets with $y$ as a limit point~\cite{Scheepers3}. 
\begin{definition}{\cite{Scheepers3}}
A space $Y$ has \textit{countable fan tightness} at $y \in Y$ if $S_{fin}(\Omega_y, \Omega_y)$ holds; that is, for every sequence $(A_n)_{n \in \N}$ with $y \in \bigcap_{n \in \N}\overline{A_n} \setminus A_n$, there exist finite $B_n \subset A_n$ with $y \in \overline{\bigcup_{n \in \N} B_n}$. A space $Y$ has \textit{countable fan tightness} if it 
has countable fan tightness at every point.
\end{definition}
 
\begin{definition}{\cite{Scheepers3}}
A space $Y$ has \textit{countable strong fan tightness} at $y \in Y$ if $S_1(\Omega_y, \Omega_y)$ holds; that is, for every sequence $(A_n)_{n \in \N}$ with $y \in\bigcap_{n \in \N} \overline{A_n} \setminus A_n$, there exist elements $b_n \in A_n$ with $y \in \overline{\{b_n : n \in \N\}}$. A space $Y$ has \textit{countable strong fan tightness} if it 
has countable strong fan tightness at every point.
\end{definition}
 The corresponding games are:
\begin{itemize}
\item The \textit{countable fan tightness game} $CFT(Y, y) := G_{fin}(\Omega_y, \Omega_y)$: in inning $n$, Player~I chooses $A_n \subset Y \setminus \{y\}$ with $y \in \overline{A_n}$; Player~II selects finite $B_n \subset A_n$.
Player~II wins if $y \in \overline{\bigcup_{n \in \N} B_n}$.
\item The \textit{countable strong fan tightness game} $SCFT(Y, y) := G_1(\Omega_y, \Omega_y)$: in inning $n$, Player~I chooses $A_n \subset Y \setminus \{y\}$ with $y \in \overline{A_n}$; Player~II selects a single $b_n \in A_n$.
Player~II wins if $y \in \overline{\{b_n : n \in \N\}}$.
\end{itemize}
Finally, we recall a notion of equivalence for spaces 
induced by their group-valued function spaces.
\begin{definition}[\cite{SS,osi4}]
Two spaces $X$ and $Y$ are said to be \textit{$G$-equivalent}, denoted $X \sim_G Y$, if $C_p(X,G)$ and $C_p(Y,G)$ are isomorphic as topological groups.

In the case $G=\mathbb{R}$, this coincides with the classical notion of \textit{$l$-equivalence}; if, instead, $C_p(X,\mathbb{R})$ and $C_p(Y,\mathbb{R})$ are merely homeomorphic as topological spaces, then $X$ and $Y$ are said to be \textit{$t$-equivalent}.
\end{definition}

\section{The Characterization Theorems}
The main results of this article are the following two 
theorems.
\begin{theorem}\label{thm:main_game}
Let $X$ be an infinite Tychonoff space and $G$ a non-trivial metrizable arc-connected topological group.
Then the following are equivalent:
\begin{enumerate}
\item[(a)] Player~II has a winning strategy in the $\Omega$-Menger game $\gfin$. 
\item[(b)] Player~II has a winning strategy in the countable fan tightness game $CFT(\Cp, \mathbf{e})$.
\end{enumerate}
\end{theorem}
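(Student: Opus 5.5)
Both implications are proved by strategy translation: each move of Player~I in one game is converted into a move in the other game, the given winning strategy is applied to it, and its answer is pulled back. The translation is the one behind the selection-principle results of Ko\v{c}inac~\cite{Koc2}, made uniform so that it can be carried out inning by inning. We fix a compatible metric $d$ on $G$ and an element $g_0 \in G\setminus\{e\}$. Since $G$ is arc-connected, we fix an arc $\gamma\colon[0,1]\to G$ with $\gamma(0)=e$ and $\gamma(1)=g_0$. Let $\delta_0 = d(e,g_0) > 0$.

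\emph{(b)$\Rightarrow$(a).} Let $\sigma$ be winning for II in $CFT(\Cp,\mathbf{e})$. For an $\omega$-cover $\U$, put
\[
A(\U) = \{ f\in\Cp : f\neq\mathbf{e},\ \exists U\in\U\ f[X\setminus U]\subset\{g_0\}\}.
\]
We first show $\mathbf{e}\in\overline{A(\U)}$. Given a finite $F\subset X$, choose $U\in\U$ with $F\subset U$. Since $X$ is Tychonoff, there is a continuous $\phi\colon X\to[0,1]$ with $\phi\restriction F=0$ and $\phi\restriction(X\setminus U)=1$; here $X\setminus U\neq\emptyset$ because $X\notin\U$. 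Then $\gamma\circ\phi$ belongs to $A(\U)$ and equals $e$ on $F$.

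For each $f\in A(\U)$ we choose a witness $U_f\in\U$. When I plays $\U_n$, II plays $\{U_f : f\in\sigma(A(\U_1),\dots,A(\U_n))\}$, which is a finite subset of $\U_n$. To see this wins, let $F$ be finite. Since $\mathbf{e}$ is in the closure of the union of the chosen sets, some chosen $f$ satisfies $d(f(x),e)<\delta_0$ for all $x\in F$. Hence $f(x)\neq g_0$ on $F$, which forces $F\subset U_f$. If the resulting family contains $X$, we simply drop that member; it is still an $\omega$-cover provided $X$ is infinite and the family is nontrivial (I will check this small technicality).

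\emph{(a)$\Rightarrow$(b).} Let $\tau$ be winning for II in $\gfin$. For $A\in\Omega_{\mathbf{e}}$ and $k\in\N$, put
\[
\U_k(A) = \{ f^{-1}[B_d(e,1/k)]^{F} : f\in A\},
\]
where we use the sets $V_{f,k}=\{x : d(f(x),e)<1/k\}$. Because $\mathbf{e}\in\overline{A}$, every finite $F$ lies in some $V_{f,k}$, so $\U_k(A)$ covers finitely. The problem is that $X$ itself may be a member. If some $V_{f,k}=X$ for infinitely many $k$, or a single $f\neq\mathbf{e}$ is uniformly close to $e$, I handle that degenerate case directly: II can simply pick such $f$'s, since $f$ being within $1/k$ of $e$ everywhere gives convergence. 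In general, at inning $n$ we use $\U_n(A_n)$ with the sets equal to $X$ removed. If $X$ survives in the family for inning $n$, II answers with that single $f$.

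Otherwise II applies $\tau$ and answers with finitely many $f$'s, one chosen for each selected $V_{f,n}$. Given a basic neighbourhood $W_{\mathbf{e}}(F,\varepsilon)$, the $\omega$-cover produced by $\tau$ contains infinitely many members containing $F$. These can be taken from innings $n>1/\varepsilon$, because the union over finitely many innings is finite and so cannot be an $\omega$-cover of an infinite space unless $F$ sits in infinitely many members. The corresponding $f$ then lies in $W_{\mathbf{e}}(F,\varepsilon)$.

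The main obstacle is exactly this step: guaranteeing that $F$ is covered by a member chosen at a late inning, and disposing of the $X\in\U$ constraint. To handle it, I would use the standard trick of replacing $F$ by $F\cup\{x_1,\dots,x_m\}$ with new points. Alternatively, I would apply $\tau$ to the covers $\{V_{f,n}\}$ relabelled so that each cover at inning $n$ is forced to refine $1/n$, and then argue that the $\omega$-cover of the union must cover each $F$ by some member from arbitrarily late innings. Since every finite set is covered, for each $m$ the set $F$ together with points excluded by the finitely many members from innings $\le m$ is covered by a member from a later inning.
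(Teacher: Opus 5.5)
Your proposal is correct and follows essentially the same route as the paper. For (b)$\Rightarrow$(a) you use arc-based test functions equal to $g_0$ off a member of the cover plus the separation radius $d(e,g_0)$; since $X\notin\U_n$, your ``technicality'' about members equal to $X$ never arises. For (a)$\Rightarrow$(b) you use the sets $\{x : d(f(x),e)<1/n\}$, a degenerate case handled by an $f$ uniformly $1/n$-close to $e$, and the augmented finite set $F\cup\{p_U\}$ forcing a covering member from a late inning. The one point to make explicit is what $\tau$ receives at degenerate innings: either feed it a fixed legal $\omega$-cover such as $\{X\setminus\{x\}:x\in X\}$, as the paper does, or run $\tau$ only along the non-degenerate innings and split the verification on whether infinitely many innings are degenerate. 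Your choice of one $f$ per selected set is in fact necessary, since the paper's $B_n=\{f\in A_n : U_{f,n}\in\V_n\}$ need not be finite.
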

\begin{theorem}\label{cor:rothberger}
Let $X$ be an infinite Tychonoff space and $G$ a non-trivial metrizable arc-connected topological group.
The following are equivalent:
\begin{enumerate}
    \item[(a)] Player~II has a winning strategy in the $\Omega$-Rothberger  game $\gone$. 
    \item[(b)] Player~II has a winning strategy in the countable strong fan tightness game $SCFT(\Cp, \mathbf{e})$.
\end{enumerate}
\end{theorem}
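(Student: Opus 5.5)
The proof of Theorem~\ref{cor:rothberger} will follow the strategy-translation scheme of Theorem~\ref{thm:main_game}. In the fan tightness game Player~II picks single functions, and in the cover game single open sets, so the translations must preserve ``singleness''. We fix a compatible metric $d$ on $G$, an element $g\in G\setminus\{e\}$, and an arc $\gamma\colon[0,1]\to G$ with $\gamma(0)=e$ and $\gamma(1)=g$. Since $\gamma$ is an embedding, $\varepsilon_0:=d(e,g)>0$. We use two dictionaries.

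\textbf{From function sets to $\omega$-covers.} For $A\in\Omega_{\mathbf e}$ and $\varepsilon>0$ put
\[
\mathcal U(A,\varepsilon)=\{f^{-1}(B_d(e,\varepsilon)) : f\in A\}.
\]
If $f^{-1}(B_d(e,\varepsilon))=X$, replace that member by a smaller open set, or note that such $f$ are harmless for the closure. This is an $\omega$-cover: for finite $F$ there is $f\in A\cap W_{\mathbf e}(F,\varepsilon)$, and then $F\subset f^{-1}(B_d(e,\varepsilon))$.

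\textbf{From $\omega$-covers to function sets.} Given $\mathcal U\in\Omega(X)$, use complete regularity to define, for each $U\in\mathcal U$ and finite $F\subset U$, a continuous $\varphi_{F,U}\colon X\to[0,1]$ with $\varphi=0$ on $F$ and $\varphi=1$ off $U$. Set $f_{F,U}=\gamma\circ\varphi_{F,U}$ and
\[
A(\mathcal U)=\{f_{F,U}: F\in[X]^{<\omega},\ U\in\mathcal U,\ F\subset U\}.
\]
Each $f_{F,U}\neq\mathbf e$, since $U\neq X$. Moreover $\mathbf e\in\overline{A(\mathcal U)}$, since $f_{F,U}\in W_{\mathbf e}(F,\varepsilon)$ for every $\varepsilon$. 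Finally, $f_{F,U}^{-1}(B_d(e,\varepsilon_0))\subset U$.

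\textbf{(a)$\Rightarrow$(b).} Let $\sigma$ be a winning strategy for II in $\gone$. When I plays $A_n$ in $SCFT$, II forms $\mathcal U_n=\mathcal U(A_n,1/n)$ and feeds it to $\sigma$. The strategy returns some $U_n=f_n^{-1}(B_d(e,1/n))$, and II answers with that $f_n$, chosen once and for all by a fixed choice function. The resulting $\{U_n\}$ is an $\omega$-cover.

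To see that $\mathbf e$ is in the closure of $\{f_n\}$, fix a finite $F$ and $\varepsilon>0$. The key step is to show that infinitely many $U_n$ contain $F$. Since $X\notin\{U_n\}$, removing finitely many $U_n$ still leaves an $\omega$-cover: choose a point outside each removed set and add it to $F$. Then pick $n$ with $1/n<\varepsilon$ and $F\subset U_n$, which gives $f_n\in W_{\mathbf e}(F,\varepsilon)$. The case $f_n\ne\mathbf e$ is automatic, because $f_n\in A_n$.

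\textbf{(b)$\Rightarrow$(a).} Let $\tau$ be winning for II in $SCFT(\Cp,\mathbf e)$. When I plays $\mathcal U_n$, II feeds $A(\mathcal U_n)$ to $\tau$. The strategy returns some $f_{F_n,U_n}$, and II answers $U_n$. The set $\{f_{F_n,U_n}\}$ accumulates at $\mathbf e$. So for finite $F$ there is $n$ with $d(f_{F_n,U_n}(x),e)<\varepsilon_0$ for all $x\in F$, hence $F\subset U_n$.

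The remaining requirement is $X\notin\{U_n\}$, which holds by the definition of $\omega$-cover. This shows $\{U_n\}\in\Omega(X)$.

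\textbf{Main obstacle.} The delicate points are bookkeeping rather than deep. First, the map $f\mapsto f^{-1}(B_d(e,\varepsilon))$ is not injective, so II needs a fixed choice of preimage, and the whole history must be translated consistently. Second, we must make sure the sets fed to $\sigma$ really are $\omega$-covers, that is, that none equals $X$. If $f^{-1}(B(e,1/n))=X$ for some $f\in A_n$, I would instead have II pick such an $f$ directly in every later inning where it occurs. Alternatively, II can pass to a modified cover by intersecting with $X\setminus\{x_n\}$ for suitable points, using that $X$ is infinite and $T_1$; this keeps the family an $\omega$-cover and preserves the infinitely-many-containment argument.
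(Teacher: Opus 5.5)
Your proposal is correct and follows the paper's proof essentially step for step. It uses the same translation $f\mapsto f^{-1}(B_d(e,1/n))$ together with the point-adding trick that forces an index $k\ge m$, and in the converse the same test functions $\gamma\circ\varphi_{F,U}$ with the threshold $d(g,e)$. The one point to make precise is an inning in which some $f\in A_n$ has $f^{-1}(B_d(e,1/n))=X$. There you must still feed $\sigma$ a genuine $\omega$-cover: the paper feeds $\{X\setminus\{x\}:x\in X\}$ and answers with that uniformly close $f$. Your second fix also works, but only if you take all sets $U\setminus\{x\}$ with $x\in X$, rather than deleting a single point $x_n$ per inning.
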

Before proving the above two theorems, we establish the following lemmas, which will be used in the proofs.
\begin{lemma}\label{lem:closure}
Let \( A \subset \Cp \). Then \( \mathbf{e} \in \overline{A} \) if and only if for every finite subset \( F \subset X \) and every \( \epsilon > 0 \), there exists \( f \in A \) such that \( d(f(x), e) < \epsilon \) for all \( x \in F \), where $d$ denotes the metric on $G$.
\end{lemma}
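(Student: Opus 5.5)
This is essentially an unwinding of the definition of closure in the pointwise topology. The key fact is that the sets $W_{\mathbf{e}}(F,\epsilon)$, with $F$ ranging over finite subsets of $X$ and $\epsilon>0$, form a neighbourhood base at $\mathbf{e}$ in $\Cp$. Once this is established, both directions follow from the characterization that $\mathbf{e}\in\overline{A}$ iff every basic neighbourhood of $\mathbf{e}$ meets $A$. Membership $f \in W_{\mathbf{e}}(F,\epsilon)$ means exactly $d(f(x),e)<\epsilon$ for all $x\in F$, which is the condition in the statement.

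First I verify the base property. Each $W_{\mathbf{e}}(F,\epsilon)=\bigcap_{x\in F}W(x,B_d(e,\epsilon))$ is a finite intersection of subbasic open sets, so it is an open neighbourhood of $\mathbf{e}$. Conversely, let $O$ be an open set containing $\mathbf{e}$. By definition of the subbasis, there is a basic set $\bigcap_{i=1}^{k}W(x_i,U_i)\subset O$ containing $\mathbf{e}$, with each $U_i$ open in $G$ and $e\in U_i$. Since $d$ metrizes $G$, for each $i$ I choose $\epsilon_i>0$ with $B_d(e,\epsilon_i)\subset U_i$. Setting $F=\{x_1,\dots,x_k\}$ and $\epsilon=\min_i\epsilon_i$ gives $W_{\mathbf{e}}(F,\epsilon)\subset O$. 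If some $x_i$ coincide, intersecting the corresponding $U_i$ handles it harmlessly.

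Next come the two directions. For ($\Rightarrow$): if $\mathbf{e}\in\overline{A}$, then each neighbourhood $W_{\mathbf{e}}(F,\epsilon)$ meets $A$, and any $f$ in the intersection satisfies the required inequality. For ($\Leftarrow$): given any open $O\ni\mathbf{e}$, I pick $F$ and $\epsilon$ as above. The hypothesis yields $f\in A\cap W_{\mathbf{e}}(F,\epsilon)\subset A\cap O$, so $\mathbf{e}\in\overline{A}$.

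There is no genuine obstacle here. The only point needing care is the reduction from arbitrary subbasic neighbourhoods to metric balls centred at $e$, which uses only that $d$ induces the topology of $G$. Arc-connectedness and the Tychonoff property of $X$ play no role in this lemma.
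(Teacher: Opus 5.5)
Your proof is correct and takes the same approach as the paper: both use that the sets $W_{\mathbf{e}}(F,\varepsilon)$ form a neighbourhood base at $\mathbf{e}$, so $\mathbf{e}\in\overline{A}$ if and only if every such set meets $A$. The only difference is that you explicitly verify the base property from the subbasis, which the paper simply asserts.
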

\begin{proof}
Since the collection 
$\{ W_{\mathbf{e}}(F, \varepsilon) : F \subset X 
\text{ finite},\; \varepsilon > 0 \}$ forms a 
neighborhood base for $\mathbf{e}$, we have 
$\mathbf{e} \in \overline{A}$ if and only if 
$W_{\mathbf{e}}(F, \varepsilon) \cap A \neq \emptyset$ 
for every finite $F \subset X$ and every 
$\varepsilon > 0$, which is precisely the stated 
condition.
\end{proof}
\begin{lemma}\label{lem:test_function}
Fix $g \in G \setminus \{e\}$ and an arc $\gamma : [0,1] \hookrightarrow G$ with $\gamma(0) = e$ and $\gamma(1) = g$. Let $U \subsetneq X$ be an open set and $F \subset U$ a finite subset. Then there exists a map $f_{F,U} \in C_p(X, G)$ such that
\begin{enumerate}
    \item[\textup{(i)}] $f_{F,U}(x) = e$ for all $x \in F$,
    \item[\textup{(ii)}] $f_{F,U}(x) = g$ for all $x \in X \setminus U$.
\end{enumerate}
\end{lemma}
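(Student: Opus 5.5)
The plan is to build $f_{F,U}$ as a composite $\gamma\circ\varphi$, where $\varphi\colon X\to[0,1]$ is a continuous real-valued Urysohn-type function. This uses only the Tychonoff property of $X$ and the continuity of the arc $\gamma$.

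\textbf{Step 1: the function for a single point.} Write $F=\{x_1,\dots,x_k\}$; if $F=\emptyset$, take $\varphi\equiv 1$. The set $X\setminus U$ is closed and nonempty because $U\subsetneq X$, and each $x_i\in U$ lies outside it. Complete regularity therefore gives, for each $i$, a continuous $\varphi_i\colon X\to[0,1]$ with $\varphi_i(x_i)=0$ and $\varphi_i\equiv 1$ on $X\setminus U$.

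\textbf{Step 2: combine over $F$.} Put $\varphi=\min_{1\le i\le k}\varphi_i$. As a finite minimum of continuous functions, $\varphi$ is continuous and takes values in $[0,1]$. Moreover $\varphi(x_i)=0$ for every $i$, and $\varphi\equiv 1$ on $X\setminus U$ because every $\varphi_i$ equals $1$ there.

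\textbf{Step 3: transfer to $G$.} Define $f_{F,U}=\gamma\circ\varphi\colon X\to G$. It is continuous as a composition of continuous maps, so $f_{F,U}\in\Cp$. For $x\in F$ we get $f_{F,U}(x)=\gamma(0)=e$, which is (i). For $x\in X\setminus U$ we get $f_{F,U}(x)=\gamma(1)=g$, which is (ii).

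The only point needing care is Step 2: the pointwise minimum, rather than the product or the maximum, is what keeps the value $1$ on $X\setminus U$ while forcing $0$ on all of $F$. Everything else is routine. The arc $\gamma$ need only be continuous here; that it is an embedding is not used.
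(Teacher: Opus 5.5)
Your proof is correct and is essentially the paper's argument: complete regularity gives functions $\varphi_i$ vanishing at $x_i$ and equal to $1$ on $X\setminus U$, you take their pointwise minimum $\varphi$, and you set $f_{F,U}=\gamma\circ\varphi$. Your separate treatment of $F=\emptyset$ (taking $\varphi\equiv 1$) is a small improvement: the paper's minimum is undefined in that case, and the case does arise later, where $F\subset U$ ranges over all finite sets.
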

\begin{proof}
Suppose $F=\{x_1,\ldots,x_k\}$. By complete regularity of $X$, for each $x_i \in F$ there exists a continuous map $g_i\colon X \to [0,1]$ such that $g_i(x_i)=0$ and $g_i|_{X\setminus U}=1$. We then define $\varphi\colon X \to [0,1]$ by $\varphi(y)=\min_{1\le i\le k} g_i(y)$, which is continuous on $X$. Now set $f_{F,U}=\gamma\circ \varphi$. Then $f_{F,U}|_F=\gamma(0)=e$ and $f_{F,U}|_{X\setminus U}=\gamma(1)=g$.
\end{proof}
We now prove the two theorems.
\begin{proof}[Proof of Theorem~\ref{thm:main_game}]
Let $\sigma$ be a winning strategy for Player~II in $G_{fin}(\Omega(X), \Omega(X))$. We construct a winning strategy for Player~II in $CFT(\Cp, \mathbf{e})$.

Since $X$ is infinite and $T_1$, every finite subset $F$ of $X$ satisfies $F\subset X\setminus \{x\}$ for some $x\in X\setminus F$. Hence $\U_0 = \{ X \setminus \{x\} : x \in X \}$ is an $\omega$-cover of $X$.

Suppose that Player~I selects subsets \( A_n \subset \Cp \) such that \( \mathbf{e} \in \overline{A_n} \setminus A_n \) for every \( n \). For each \( n \) and each \( f \in A_n \), define
\begin{equation*}
U_{f,n} := \{ x \in X : d(f(x), e) < 1/n \}.
\end{equation*}
Let
\[
\U_n' := \{ U_{f,n} : f \in A_n,\; U_{f,n} \neq X \}.
\]
We say that the \( n \)-th inning is non-degenerate if \( \U_n' \) forms an \( \omega \)-cover of \( X \); otherwise, it is called degenerate.

Suppose that \( \U_n' \) is not an \( \omega \)-cover of \( X \). Then, by Definition~\ref{def:omega_cover}, there exists a finite set \( K \subset X \) such that \( K \) is not contained in any member of \( \U_n' \). Since \( \mathbf{e} \in \bigcap_{n\in \mathbb{N}} \overline{A_n} \), Lemma~\ref{lem:closure} yields that there exists \( f_n \in A_n \) such that  $d(f_n(x), e) < \frac{1}{n}$ for all  $x \in K$. 
Hence \( K \subset U_{f_n,n} \). It follows from the choice of \( K \) that \( U_{f_n,n} \notin \U_n' \), and therefore \( U_{f_n,n} = X \). Consequently, if $n$-th inning is degenerate, then there exists \( f_n \in A_n \) such that \( U_{f_n,n} = X \).

Set
\[
\widetilde{\U}_n =
\begin{cases}
\U_n' & \text{if inning $n$ is non-degenerate,}\\
\U_0 & \text{if inning $n$ is degenerate.} 
\end{cases}
\]
Note that \( \widetilde{\U}_n \) is an \( \omega \)-cover of \( X \). Since \( \sigma \) is a winning strategy for Player~II in the game \( G_{fin}(\Omega(X), \Omega(X)) \), it follows from Definition~\ref{def:gfin_general} that for each \( n \) there exists a finite set \( \V_n \subset \widetilde{\U}_n \) such that $\mathcal{C} := \bigcup_{n \in \mathbb{N}} \V_n$ is an \( \omega \)-cover of \( X \).

We define Player~II’s response in the game \( CFT(\Cp, \mathbf{e}) \) by
\[
B_n =
\begin{cases}
\{ f \in A_n : U_{f,n} \in \V_n \}, & \text{if the \( n \)-th inning is non-degenerate},\\
\{ f_n \}, & \text{if the \( n \)-th inning is degenerate},
\end{cases}
\]
where, in the degenerate case, \( f_n \in A_n \) is as obtained above with \( U_{f_n,n} = X \).  
Note that in either case, \( B_n \) is a finite subset of \( A_n \). Now to show that the choice of $B_i$'s gives winning strategy for Player II in the countable fan tightness game $CFT(C_p(X,G)),\mathbf{e})$, it suffices to show that $\mathbf{e} \in \overline{\bigcup_{n} B_n}$.

Let $\delta>0$ and $F$ be a finite subset of $X$. Since $W_{\mathbf{e}}(F,\delta)=\{f\in C_p(X,G): 
d(f(x),e)< \delta \text{ for all } x\in F\}$ is a basic 
open neighbourhood of $\mathbf{e}$, we choose $m\in \mathbb{N}$ such that $\frac{1}{m}<\delta$. The we consider the collection $\mathcal{C}_{< m}: \bigcup\limits_{k=1}^{m-1}\V_k$. Clearly $\mathcal{C}_{< m}$ is finite and every member of $\mathcal{C}_{< m}$ is a proper subset of $X$. Now for each $U\in \mathcal{C}_{< m}$, we pick a point $p_U\in X\setminus U$. Then 
\[F^* = F \cup \{ p_U : U \in \mathcal{C}_{<m} \}\]
is a finite subset of $X$ and is not contained in any member of $\mathcal{C}_{< m}$. Since $\mathcal{C}$ is an $\omega$-cover, there exists $k \ge m$ and $V \in \V_k$ with $F^* \subset V$.

If inning $k$ is non-degenerate, then $V=U_{f,k}$ for some $f\in B_k$. Hence 
\[
d(f(x), e) < \frac{1}{k} \le \frac{1}{m} < \delta
\quad \text{for all } x \in F.
\]
If inning $k$ is degenerate, then there exists $f_k\in B_k$ such that $d(f_k(x),e)< \frac{1}{k}$ for all $x\in X$. Thus 
\[d(f_k(x),e)< \frac{1}{k}\leq \frac{1}{m}< \delta ~ \text{ for all } x\in F.\]
Hence, in both cases, \( W_{\mathbf{e}}(F,\delta) \cap \bigcup_{n} B_n \neq \emptyset \). Consequently, \( \mathbf{e} \in \overline{\bigcup_{n} B_n} \).

Conversely, suppose that \( \tau \) is a winning strategy for Player~II in the countable fan tightness game \( CFT(\Cp, \mathbf{e}) \). Our aim is to construct a winning strategy for Player II in the game $G_{fin}(\Omega(X),\Omega(X))$.

Let Player~I choose \( \U_n \in \Omega(X) \) in inning $n$. Since \( G \) is a non-trivial topological group, there exists \( g \in G \) with \( g \neq e \). Fix such an element \( g \). As \( G \) is arc connected, it follows from Lemma~\ref{lem:test_function} that for every \( U \in \U_n \) and every finite set \( F \subset U \), there exists a function \( f_{F,U} \in C_p(X,G) \) such that  
\begin{equation*}
f_{F,U}\vert_F = e \quad \text{and} \quad f_{F,U}\vert_{X \setminus U} = g.
\end{equation*}
Now, we define $A_n = \bigl\{\, f_{F,U} : \; U \in \U_n,\; F \subset U \text{ finite}\,\bigr\}$.
Clearly $\mathbf{e}\not\in A_n$ for each $n$. We show that $\mathbf{e}\in \overline{A_n}$ for any $n$. 

Let \( K \subset X \) be a finite set and let \( \epsilon > 0 \). Since \( \U_n \) is an \( \omega \)-cover, Definition~\ref{def:omega_cover} implies that there exists \( U \in \U_n \) such that \( K \subset U \). For this choice of \( U \), the function \( f_{K,U} \in A_n \) satisfies \( f_{K,U}(x) = e \) for all \( x \in K \), and hence \( f_{K,U} \in W_{\mathbf{e}}(K,\epsilon) \). It follows from Lemma~\ref{lem:closure} that \( \mathbf{e} \in \overline{A_n} \).

Since \( \tau \) is a winning strategy for Player~II in the game \( CFT(C_p(X,G), \mathbf{e}) \), at the \( n \)-th inning, after Player~I chooses \( A_n \), Player~II selects a finite set \( B_n \subset A_n \) according to \( \tau \) so that $\mathbf{e} \in \overline{\bigcup_{n \in \mathbb{N}} B_n}$.
Note that for each \( h \in B_n \), there exist \( U_h \in \U_n \) and a finite set \( F_h \subset U_h \) such that \( h = f_{F_h, U_h} \). Define $\V_n := \{ U_h : h \in B_n \}$. To verify that this choice of \( \V_n \) yields a winning strategy for Player~II in the game \( G_{fin}(\Omega(X), \Omega(X)) \), it remains to show that $\bigcup_{n \in \mathbb{N}} \V_n$ is an \( \omega \)-cover of \( X \).

Let $F \subset X$ be an arbitrary finite set. Consider the basic open neighborhood $W_{\mathbf{e}}(F, r)$ of $\mathbf{e}$, where $r = d(g, e) > 0$. Since $\mathbf{e} \in \overline{\bigcup\limits_{n\in \mathbb{N}} B_n}$, there exists $h \in \bigcup\limits_{n\in \mathbb{N}} B_n$ with $h \in W_{\mathbf{e}}(F, r)$. Hence $d(h(x), e) < r$ for all $x \in F$. As $h\in \bigcup\limits_{n\in \mathbb{N}} B_n$, $h=f_{F_h, U_h}$ for some $U_h\in \U_k$ and finite subset $F_h$ of $U_h$. We claim that $F\subset U_h$. If not, then there exists a $x\in F$ such that $x\not\in U_h$. Therefore $h(x)= f_{F_h,U_h}(x)=g$ and so $d(h(x),e)=r$, contradicting that $h\in W_{\mathbf{e}}(F,r)$. Hence $F\subset U_h$ and $U_h\in \V_k\subset \bigcup\limits_{n\in \mathbb{N}} \V_n$. Since $\U_k$ is an $\omega$-cover, $U_h \neq X$. Consequently, $\bigcup\limits_{n\in \mathbb{N}} \V_n$ is an $\omega$-cover of $X$. This completes the proof. 
\end{proof}
The proof of Theorem~\ref{thm:main_game} extends to the $\Omega$-Rothberger analogue by replacing finite selections with single-element selections.
\begin{proof}[Proof of Theorem~\ref{cor:rothberger}]
The proof adapts the argument of 
Theorem~\ref{thm:main_game} to the $G_1$ setting.

Suppose Player~II has a winning strategy $\sigma$ in $\gone$. 
Given $A_n \subset \Cp$ with $\mathbf{e} \in \overline{A_n} \setminus A_n$, construct the $\omega$-covers $\widetilde{\U}_n$ exactly as in 
Theorem~\ref{thm:main_game}. 
Apply $\sigma$ to select $V_n \in \widetilde{\U}_n$ at each inning. Since $\sigma$ is winning, $\{V_n : n \in \N\}$ is an $\omega$-cover of $X$. 
If inning $n$ is non-degenerate, set $b_n = f_n$ where $V_n = U_{f_n, n}$; if degenerate, set $b_n = f_n$ as before. 
We now show that $\mathbf{e} \in \overline{\{b_n : n \in \N\}}$. For any finite subset $F \subset X$ and any $\delta > 0$, 
the set $W_{\mathbf{e}}(F,\delta)$ is a basic open 
neighbourhood of $\mathbf{e}$. Choose $m \in \N$ with 
$\frac{1}{m} < \delta$. Since each $V_i$ is a finite subset of $X$, we choose a point $p_{V_i}\in X\setminus V$. 
Then $F^* = F \cup \{p_{V_1}, \ldots, p_{V_{m-1}}\}$ is a finite subset of $X$ that is not contained in any $V_i$ for $i\leq m-1$. 
As $\{V_n: n\in \mathbb{N}\}$ is an $\omega$-cover, there exists $k \geq m$ such that $F^* \subset V_k$. 
Then, for both degenerate and non-degenerate $k$, we have
\[
d(b_k(x), e) < \frac{1}{k} \leq \frac{1}{m} < \delta
\]
for all $x \in F$. Consequently, $b_k \in W_{\mathbf{e}}(F, \delta)$.  

Conversely, suppose Player~II has a winning strategy 
$\tau$ in $SCFT(\Cp, \mathbf{e})$. 
Construct $A_n$ from $\omega$-covers $\U_n$ as in Theorem~\ref{thm:main_game}. 
Apply $\tau$ to obtain, at each inning, $b_n = f_{F_n, U_n} \in A_n$. Since $\tau$ is a winning strategy, it follows that $\mathbf{e} \in \overline{\{ b_n : n \in \mathbb{N} \}}$. 
Now for any finite set $F \subset X$, consider the neighborhood $W_{\mathbf{e}}(F, r)$, where $r = d(g,e)$. Then there exists $k$ such that $b_k \in W_{\mathbf{e}}(F, r)$. Also, note that each $b_k$ can be expressed as $f_{F_k,U_k}$ for some $U_k\in \U_k$ and finite subset $F_k\subset U_k$. Suppose, for a contradiction, that there is some $x \in F$ with $x \notin U_k$. By the definition of $b_k$, this gives $b_k(x) = g$, and hence $d(b_k(x), e) = r$. This contradicts the fact that $b_k \in W_{\mathbf{e}}(F, r)$. 
Thus $F \subset U_k$, and so $\{U_n : n \in \N\}$ is an $\omega$-cover. This completes the proof.
\end{proof}
\section{Corollaries}
In this section we derive consequences of Theorems~\ref{thm:main_game} 
and~\ref{cor:rothberger}. Since the $\Omega$-Menger and $\Omega$-Rothberger games on $X$ are independent of $G$, the corresponding game-theoretic tightness properties of $C_p(X,G)$ depend only on the topology of $X$, and are thus independent of the choice of the target group.
\begin{corollary}
Let $X$ be an infinite Tychonoff space and let $G_1, G_2$ be non-trivial metrizable arc-connected topological groups. Then
\begin{enumerate}
    \item[\textup{(a)}] $\textup{II} \uparrow CFT(C_p(X, G_1), \mathbf{e}_1)$ if and only if $\textup{II} \uparrow CFT(C_p(X, G_2), \mathbf{e}_2)$.
    \item[\textup{(b)}] $\textup{II} \uparrow CFT(C_p(X, G_1 \times G_2), (\mathbf{e}_1, \mathbf{e}_2))$ if and only if $\textup{II} \uparrow CFT(C_p(X, G_1), \mathbf{e}_1)$.
\end{enumerate}
The same conclusions hold for the  countable strong fan tightness 
game in place of the countable fan tightness game.
\end{corollary}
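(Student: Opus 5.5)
The corollary follows from Theorems~\ref{thm:main_game} and~\ref{cor:rothberger}: each converts the game-theoretic tightness of $C_p(X,H)$ at the identity into a property of $X$ alone. The plan is to route every equivalence through $\gfin$ (respectively $\gone$).

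For part (a), apply Theorem~\ref{thm:main_game} with $G=G_1$. This gives $\textup{II}\uparrow CFT(C_p(X,G_1),\mathbf{e}_1)$ if and only if $\textup{II}\uparrow\gfin$. Applying it again with $G=G_2$ shows that this holds if and only if $\textup{II}\uparrow CFT(C_p(X,G_2),\mathbf{e}_2)$. Chaining the two equivalences yields (a). Both applications are legitimate because $X$ is an infinite Tychonoff space and each $G_i$ is non-trivial, metrizable and arc-connected.

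For part (b), the only real step is to check that $G_1\times G_2$ satisfies the hypotheses of Theorem~\ref{thm:main_game}; the identity function $(\mathbf{e}_1,\mathbf{e}_2)$ is then exactly the function $\mathbf{e}$ of that theorem. The verification goes as follows.
\begin{itemize}
\item $G_1\times G_2$ is a topological group with identity $(e_1,e_2)$.
\item It is metrizable, for instance via $d((a,b),(a',b'))=\max\{d_1(a,a'),d_2(b,b')\}$.
\item It is non-trivial, since $G_1$ already is.
\item It is path-connected: if $\alpha$ is a path in $G_1$ from $a$ to $a'$ and $\beta$ is a path in $G_2$ from $b$ to $b'$, then $t\mapsto(\alpha(t),\beta(t))$ is a path from $(a,b)$ to $(a',b')$. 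Since the product is Hausdorff, the equivalence of path- and arc-connectedness noted in the preliminaries (\cite[Corollary~31.6]{Willard}) makes it arc-connected.
\end{itemize}
Theorem~\ref{thm:main_game} then gives $\textup{II}\uparrow CFT(C_p(X,G_1\times G_2),(\mathbf{e}_1,\mathbf{e}_2))$ if and only if $\textup{II}\uparrow\gfin$, which by the theorem applied to $G_1$ holds if and only if $\textup{II}\uparrow CFT(C_p(X,G_1),\mathbf{e}_1)$.

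The strong fan tightness versions follow by the identical argument, using Theorem~\ref{cor:rothberger} and $\gone$ in place of Theorem~\ref{thm:main_game} and $\gfin$.

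None of this is a genuine obstacle. The only point needing care is the arc-connectedness of the product, which I handle by reducing to path-connectedness as above rather than constructing an embedded arc directly.
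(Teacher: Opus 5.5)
Your proposal is correct and follows the same route as the paper, whose proof simply derives the corollary from Theorems~\ref{thm:main_game} and~\ref{cor:rothberger} by passing through $\gfin$ and $\gone$. You also check explicitly that $G_1\times G_2$ is a non-trivial metrizable arc-connected topological group, using path-connectedness of the Hausdorff product; this is exactly the detail the paper leaves implicit for part (b).
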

\begin{proof}
   The proof follows from  Theorems~\ref{thm:main_game} and~\ref{cor:rothberger}.  
\end{proof}
We now introduce the following notion of equivalence for group-valued function spaces.
\begin{definition}
    Two spaces $X$ and $Y$ are called $t_G$-equivalent if 
$C_p(X, G)$ and $C_p(Y, G)$ are homeomorphic as 
topological spaces. When $G = \R$, this coincides with 
the classical $t$-equivalence. Since every topological 
group isomorphism is in particular a homeomorphism, 
$G$-equivalence implies $t_G$-equivalence.
\end{definition}

The game-theoretic framework yields a preservation 
result under this weaker equivalence that is stronger 
than what is currently known at the selection-principle 
level. Sakai~\cite{Sakai} proved that the $\Omega$-Menger property is 
preserved under $l$-equivalence under an additional condition, and Osipov~\cite[Question~1]{osi4} 
asked whether the Classical Menger property is 
preserved under $t$-equivalence in general. While Osipov's question remains open at the 
selection-principle level, the following corollary shows 
that the strictly stronger property of Player~II having 
a winning strategy in the $\Omega$-Menger game is 
preserved under $t_G$-equivalence of Tychonoff spaces. 
\begin{corollary}
Let $G$ be a non-trivial metrizable arc-connected topological group, and let $X, Y$ be $t_G$-equivalent infinite Tychonoff spaces. 
Then
\begin{enumerate}
    \item[\textup{(a)}] $\textup{II} \uparrow \gfin$ if and only if $\textup{II} \uparrow \gfinY$ .
    \item[\textup{(b)}] $\textup{II} \uparrow \gone$ if and only if $\textup{II} \uparrow \goneY$ .
\end{enumerate}
\end{corollary}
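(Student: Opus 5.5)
The plan is to reduce the statement to Theorems~\ref{thm:main_game} and~\ref{cor:rothberger}, and to show that the games $CFT(\cdot,\cdot)$ and $SCFT(\cdot,\cdot)$ are invariant under homeomorphisms that preserve the base point. By Theorem~\ref{thm:main_game}, $\textup{II}\uparrow\gfin$ holds if and only if $\textup{II}\uparrow CFT(C_p(X,G),\mathbf{e}_X)$. Likewise, $\textup{II}\uparrow\gfinY$ holds if and only if $\textup{II}\uparrow CFT(C_p(Y,G),\mathbf{e}_Y)$. Here $\mathbf{e}_X,\mathbf{e}_Y$ denote the constant identity functions. Theorem~\ref{cor:rothberger} gives the same reduction for (b) with $SCFT$. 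So it suffices to prove the following: if $\Phi\colon C_p(X,G)\to C_p(Y,G)$ is a homeomorphism, then Player~II has a winning strategy in $CFT(C_p(X,G),\mathbf{e}_X)$ if and only if Player~II has one in $CFT(C_p(Y,G),\mathbf{e}_Y)$, and similarly for $SCFT$.

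The first step handles the base point, since $\Phi$ need not send $\mathbf{e}_X$ to $\mathbf{e}_Y$. Put $u=\Phi(\mathbf{e}_X)$. Then $u^{-1}$ (the pointwise inverse $x\mapsto u(x)^{-1}$) is again continuous, so $u^{-1}\in C_p(Y,G)$. The space $C_p(Y,G)$ is a topological group under pointwise operations, because it is a subgroup of the product $G^Y$. Hence left translation $L\colon k\mapsto u^{-1}k$ is a homeomorphism of $C_p(Y,G)$. Then $\Psi=L\circ\Phi$ is a homeomorphism with $\Psi(\mathbf{e}_X)=\mathbf{e}_Y$.

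The second step transfers strategies along $\Psi$. Let $\tau$ be a winning strategy for Player~II in $CFT(C_p(X,G),\mathbf{e}_X)$. Suppose Player~I plays $A_n\subset C_p(Y,G)$ with $\mathbf{e}_Y\in\overline{A_n}\setminus A_n$.
\begin{enumerate}
\item[(1)] Since $\Psi$ is a homeomorphism fixing the base points, $\Psi^{-1}(A_n)$ is a legal move in the $X$-game: $\mathbf{e}_X\in\overline{\Psi^{-1}(A_n)}\setminus\Psi^{-1}(A_n)$.
\item[(2)] Player~II answers with $\Psi\bigl(\tau(\Psi^{-1}(A_1),\dots,\Psi^{-1}(A_n))\bigr)$. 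This is a finite subset of $A_n$, and a single element in the $SCFT$ case.
\item[(3)] Because $\tau$ is winning, $\mathbf{e}_X$ lies in the closure of the union of the $X$-side responses. Homeomorphisms preserve closures, so $\mathbf{e}_Y$ lies in the closure of the union of their images.
\end{enumerate}
The reverse implication follows by symmetry using $\Psi^{-1}$. If Markov strategies are wanted, the same transfer clearly preserves the Markov property.

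I expect the main obstacle to be the base-point issue, since $t_G$-equivalence only provides an arbitrary homeomorphism. It is resolved by the homogeneity of the topological group $C_p(Y,G)$. Everything else is routine bookkeeping with Lemma~\ref{lem:closure}-style closure arguments.
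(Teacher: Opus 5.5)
Your proposal is correct and follows essentially the same route as the paper. You reduce via Theorems~\ref{thm:main_game} and~\ref{cor:rothberger} to the (strong) fan tightness games at the identity, normalize the homeomorphism by a left translation in the topological group $C_p(Y,G)$ so that $\mathbf{e}_X \mapsto \mathbf{e}_Y$, and then transport Player~II's winning strategy by pulling back Player~I's moves and pushing forward the responses, with the converse following by symmetry via the inverse homeomorphism.
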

\begin{proof}
Let $\varphi\colon C_p(X, G) \to C_p(Y, G)$ be a homeomorphism.
Since $C_p(X, G)$ is a topological group, composing $\varphi$ with left-translation by $\varphi(\mathbf{e}_X)^{-1}$ yields a homeomorphism sending $\mathbf{e}_X$ to $\mathbf{e}_Y$.
Hence we may assume $\varphi(\mathbf{e}_X) = \mathbf{e}_Y$.

We first show that any winning strategy for Player~II in \( CFT(C_p(X,G), \mathbf{e}_X) \) yields a winning strategy for Player~II in \( CFT(C_p(Y,G), \mathbf{e}_Y) \).

Let $\sigma$ be a winning strategy for Player~II in $CFT(C_p(X,G), \mathbf{e}_X)$.
We define a strategy $\sigma'$ for Player~II in $CFT(C_p(Y,G), \mathbf{e}_Y)$ as follows.

Suppose that in inning $n$, Player~I plays a set $A_n \subset C_p(Y,G)$ with $\mathbf{e}_Y \in \overline{A_n} \setminus A_n$ in the game on $C_p(Y,G)$. We then define
\[
A_n' = \varphi^{-1}(A_n).
\]
Since $\varphi$ is a homeomorphism with $\varphi(\mathbf{e}_X) = \mathbf{e}_Y$ and $\mathbf{e}_Y\in \overline{A_n} \setminus A_n$, we have $\mathbf{e}_X\in \overline{A_n'} \setminus A_n'$. Since $\sigma$ is a winning strategy in $CFT(C_p(X,G),\mathbf{e}_X)$, $B_n':=\sigma(A_1', \ldots, A_n')$ is a finite subset of $A_n'$ and $\mathbf{e}_X\in  \overline{\bigcup_n B_n'}$.

Now, in inning $n$, we define Player II’s response in $CFT(C_p(Y,G), \mathbf{e}_Y)$ by
\[
\sigma'(A_1,\ldots,A_n) := \varphi(B_n').
\]
Since $\varphi$ is a homeomorphism, $\varphi(B_n')$ is finite and \[
\mathbf{e}_Y = \varphi(\mathbf{e}_X) \in \varphi\!\left(\overline{\textstyle\bigcup_n B_n'}\right) = \overline{\varphi\!\left(\textstyle\bigcup_n B_n'\right)}. 
\] 
Therefore $\sigma'$ is a winning strategy for Player~II in $CFT(C_p(Y,G), \mathbf{e}_Y)$.

The reverse implication follows by applying the same argument to $\varphi^{-1}$.
Now, by Theorem~\ref{thm:main_game}, the following statements are equivalent:
\begin{align*}
\textup{II} \uparrow \gfin 
&\;\Longleftrightarrow\; \textup{II} \uparrow CFT(C_p(X,G), \mathbf{e}_X) \\
&\;\Longleftrightarrow\; \textup{II} \uparrow CFT(C_p(Y,G), \mathbf{e}_Y) \\
&\;\Longleftrightarrow\; \textup{II} \uparrow \gfinY .
\end{align*}
This completes the proof of {(a)}.

{(b)} follows identically using Theorem~\ref{cor:rothberger}.
\end{proof}

\section{Concluding Remarks}
We note that in Theorems~\ref{thm:main_game} 
and~\ref{cor:rothberger}, the forward implication 
requires only the Tychonoff property of $X$ and 
metrizability of $G$, while the converse additionally 
requires arc-connectedness; for non-arc-connected groups, 
the appropriate substitute is $G^*$-regularity of $X$ 
(see~\cite[Definition~2.2]{SS}).
The strategy translations in both directions depend only 
on the current inning: in each direction, the translated 
response at inning $n$ is constructed solely from 
Player~I's current move and the round number $n$, with 
no reference to previous innings. (The augmented set 
$F^*$ in the verification of the winning condition uses 
the history of past responses, but the strategy itself, 
that is, the rule determining Player~II's response, does not.)
This extends 
Clontz~\cite[Theorems~22 and~43]{Clontz} from $G = \R$ 
to arbitrary metrizable arc-connected groups, and yields 
the following.

\begin{proposition}
Let $X$ be an infinite Tychonoff space and $G$ a non-trivial metrizable arc-connected topological group. 
Then 
\begin{itemize}
    \item[(i)] Player~II has a winning Markov strategy in $\gfin$ if and only if Player~II has a winning Markov strategy 
in $CFT(\Cp, \mathbf{e})$.
\item[(ii)] Player~II has a winning Markov strategy in $\gone$  if and only if Player~II has a winning Markov strategy 
in $SCFT(\Cp, \mathbf{e})$. 
\end{itemize}
\end{proposition}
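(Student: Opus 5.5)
The plan is to re-run both directions of the proofs of Theorems~\ref{thm:main_game} and~\ref{cor:rothberger} and check that each strategy translation is a function of Player~I's current move and the inning number only. Once all auxiliary choices are made globally, in advance, a Markov strategy $\tau(\cdot,n)$ for one game yields a Markov strategy for the other. I will do (i) in detail; (ii) is the same argument with single selections, following the proof of Theorem~\ref{cor:rothberger}.

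\emph{From $\gfin$ to $CFT(\Cp,\mathbf{e})$.} Let $\tau\colon\Omega(X)\times\N\to[\,\cdot\,]^{<\omega}$ be a winning Markov strategy. Using the axiom of choice, I fix once and for all the following data for every pair $(A,n)$ with $A\in\Omega_{\mathbf{e}}$.
\begin{enumerate}
\item[(1)] The family $\U'(A,n)=\{U_{f,n}: f\in A,\ U_{f,n}\neq X\}$ and the resulting $\omega$-cover $\widetilde{\U}(A,n)$, equal to $\U'(A,n)$ in the non-degenerate case and to $\U_0$ in the degenerate case.
\item[(2)] In the degenerate case, a chosen $f_{A,n}\in A$ with $U_{f_{A,n},n}=X$; this exists by the argument in the proof of Theorem~\ref{thm:main_game}.
\item[(3)] In the non-degenerate case, for each $U\in\U'(A,n)$, a chosen $f\in A$ with $U=U_{f,n}$.
\end{enumerate}
Define $\rho(A,n)$ to be the finite set of chosen representatives of the members of $\tau(\widetilde{\U}(A,n),n)$ in the non-degenerate case, and $\{f_{A,n}\}$ in the degenerate case. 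Each step depends only on $(A,n)$, so $\rho$ is Markov.

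To see that $\rho$ is winning, take a play $A_1,A_2,\dots$. The sequence $\widetilde{\U}(A_n,n)$ is a legal play against $\tau$, so $\bigcup_n\tau(\widetilde{\U}(A_n,n),n)$ is an $\omega$-cover. The verification via $F^*$ in the proof of Theorem~\ref{thm:main_game} then applies verbatim. It uses only that every selected set is a proper subset of $X$, which holds because $\U_0$ and each $\U'(A,n)$ omit $X$, and that the sets selected at inning $k$ are of the form $U_{f,k}$ (or $X$) for the chosen $f$.

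\emph{From $CFT(\Cp,\mathbf{e})$ to $\gfin$.} Fix $g\neq e$ and an arc $\gamma$. For every open $U\subsetneq X$ and finite $F\subset U$, fix one function $f_{F,U}$ from Lemma~\ref{lem:test_function}, chosen independently of the cover in which $U$ occurs. Then $A(\U)=\{f_{F,U}:U\in\U,\ F\subset U\text{ finite}\}$ depends only on $\U$, and lies in $\Omega_{\mathbf{e}}$ by the argument already given.

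For each $\U$ and each $h\in A(\U)$, fix a representation $h=f_{F_h,U_h}$ with $U_h\in\U$; this choice may depend on $\U$. Given a Markov winning $\tau'$, set $\rho'(\U,n)=\{U_h: h\in\tau'(A(\U),n)\}\subset\U$. This is finite and depends only on $(\U,n)$. The winning verification with $r=d(g,e)$ goes through unchanged: any $h$ close to $\mathbf{e}$ on $F$ forces $F\subset U_h$, whatever representation was chosen, since $h=g$ off $U_h$.

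The main obstacle is the bookkeeping of well-definedness rather than any topology. The choices in (1)--(3) and the representations $(F_h,U_h)$ must be fixed as functions of the current move (and $n$) alone, never of the earlier play. The original proofs phrased these choices inside a single play, so I would restate them as global choice functions first, and then observe that the verification arguments never used anything beyond this.
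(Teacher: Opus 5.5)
Your proposal is correct and follows the paper's approach. The paper also justifies the proposition by observing that the translations in the proofs of Theorems~\ref{thm:main_game} and~\ref{cor:rothberger} use only Player~I's current move and the inning number, with the history entering only in the $F^*$ verification. Your global choice functions add rigor that the paper's remark lacks. In particular, your item (3), choosing one representative per selected set, repairs a slip in the paper: its $B_n=\{f\in A_n: U_{f,n}\in\V_n\}$ need not be finite, since many $f$ can share the same $U_{f,n}$.
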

Finally, the assumption that $G$ is metrizable is essential in our approach. 
It remains open whether Theorem~\ref{thm:main_game} holds for non-metrizable groups; moreover, the corresponding selection-principle equivalence due to Ko\v{c}inac~\cite{Koc2} also requires this assumption.
\begin{acknow}
The first author is grateful for financial support in the form of Prime Minister’s Research Fellowship, Government of India (PMRF/2502403).
\end{acknow}
\bibliographystyle{plain}
\bibliography{reference}

\end{document}